\newtheorem{theo}{\bf Theorem}[section]
\newtheorem{propo}[theo]{\bf Proposition}
\newtheorem{lemma}[theo]{\bf Lemma}
\newtheorem{coro}[theo]{\bf Corollary}
\theoremstyle{remark}
\theoremstyle{definition}
\newtheorem{definition}[theo]{\bf Definition}
\begin{document}
\title {\bf Conjugacy of Coxeter elements}
\author{{\sc Henrik Eriksson} and {\sc Kimmo Eriksson}}
%\date{}
\thispagestyle{empty}   % no page numbering

\begin{abstract}
For a Coxeter group $(W,S)$, a permutation of the set $S$ 
is called a Coxeter word and the group element represented by the product
is called a Coxeter element. Moving the first letter to the end 
of the word is called a rotation and two Coxeter elements are rotation 
equivalent if their words can be transformed into each other through a
sequence of rotations and legal commutations.

We prove that Coxeter elements are conjugate if and only if they are
rotation equivalent. This was known for some special cases but not for
Coxeter groups in general.
\end{abstract}
\maketitle
\section{Introduction}
\noindent
Consider the Coxeter group defined by the Coxeter graph below. A 
{\em Coxeter word}
is a list of all generators in any order, so there are 24 Coxeter words in 
our example. Interpreting words as products we get 12 different 
Coxeter elements ($s_0$ commutes with $s_2$ and $s_3$), which fall into two
different conjugacy classes.
 
\begin{figure}[h]
 \begin{picture}(80,10)(-20,0)
    \put(-40,0){\circle{14}}
    \put(-44.5,-2.5){$s_0$}
    \put(-33,0){\line(1,0){26}}
    \put(0,0){\circle{14}}
    \put(-4.5,-2.5){$s_1$}
    \put(7,0){\line(1,0){26}}
    \put(40,0){\circle{14}}
    \put(35.5,-2.5){$s_3$}
    \put(5,5){\line(1,1){10}}
    \put(20,20){\circle{14}}
    \put(15.5,17){$s_2$}
    \put(35,5){\line(-1,1){10}}
  \end{picture}
\end{figure}
\noindent
Conjugation by the first letter of a Coxeter
word will have the effect of moving this letter to the end of the word. For 
example, if $w = s_0 s_1 s_2 s_3$ then $s_0 w s_0 = s_1 s_2 s_3 s_0$. We call
this a {\em rotation} of the word. Say that two words are {\em rotation equivalent} if
one can be obtained from the other by a series of
rotations and commutations. For example,
$$ s_0 s_1 s_2 s_3 \sim s_1 s_2 s_3 s_0 \sim s_1 s_2 s_0 s_3 \sim s_2 s_0 s_3 s_1
$$
\noindent
Our aim is to prove the following characterization of conjugacy of
Coxeter elements.
\begin{theo}\label{main}
Coxeter elements are conjugate if and only if they are rotation equivalent.
\end{theo}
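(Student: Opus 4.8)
The plan is to prove the two implications separately, and for the hard one to translate the problem into the combinatorics of acyclic orientations of the Coxeter graph $G$ (vertices $S$, with an edge $\{s,t\}$ whenever $m(s,t)\ge 3$, i.e. whenever $s,t$ do not commute).

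The implication ``rotation equivalent $\Rightarrow$ conjugate'' is immediate from the definitions and needs no real work: a commutation replaces a word by another word representing the \emph{same} group element, while a rotation sends $s_{i_1}s_{i_2}\cdots s_{i_n}$ to $s_{i_2}\cdots s_{i_n}s_{i_1}=s_{i_1}(s_{i_1}s_{i_2}\cdots s_{i_n})s_{i_1}$, a conjugate. Hence any chain of rotations and commutations produces a conjugate element.

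For the converse I would first set up the standard dictionary between Coxeter elements and acyclic orientations of $G$. A Coxeter word orders $S$ linearly and thereby orients each edge from its earlier to its later endpoint; commuting two adjacent letters never changes the orientation, and conversely two Coxeter words are commutation equivalent iff they induce the same (necessarily acyclic) orientation. I would record the fact — to be proved or cited — that distinct orientations yield distinct elements, so that Coxeter elements are in bijection with acyclic orientations of $G$. Under this dictionary a rotation realizes a \emph{source-to-sink flip}, and every such flip is so realizable: if $s$ is a source then all letters preceding it in the word commute with it, so $s$ may be commuted to the front and then rotated to the end, turning the source into a sink. Thus rotation equivalence corresponds precisely to the equivalence on acyclic orientations generated by source-to-sink flips. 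The relevant invariant is then, for each cycle $C$ of $G$ with a fixed reference direction, the number of edges of $C$ oriented forward: a short local computation shows a flip at a source of $C$ turns one forward edge backward and one backward edge forward, so this count is preserved. The main combinatorial step is the converse connectivity statement — any two acyclic orientations agreeing on the forward-count of every cycle are joined by source-to-sink flips — which I would prove by induction on $|S|$, peeling off leaves (whose edges lie in no cycle and whose orientation is therefore free) and with trees as the base case, recovering the classical fact that over a tree all Coxeter elements are conjugate. Granting this, rotation equivalence is \emph{exactly} equality of all cycle counts, which already explains the two conjugacy classes in the triangle-with-pendant example, where the single cycle admits forward-counts $1$ or $2$.

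It remains to match conjugacy to the same invariant, and this is where I expect the real difficulty to lie. One inclusion is now free: equal cycle counts give flip-connected, hence rotation equivalent, hence conjugate elements. The hard inclusion — conjugate Coxeter elements have equal cycle counts — is the principal obstacle of the whole argument. Since the counts range over many integer values they cannot be read off the abelianization of $W$, so I would argue through the geometric (reflection) representation: the characteristic polynomial of a Coxeter element is a conjugacy invariant and, for the element attached to an acyclic orientation, is computable from $G$ and the orientation; the task is to show that this (or a suitable refinement of it) genuinely separates orientations with different cycle counts, equivalently that the combinatorial flow is a conjugacy invariant. Establishing that separation is the crux; the easy direction and the reorientation theorem above then close the loop and yield the theorem.
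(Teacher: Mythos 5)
Your easy direction and the combinatorial dictionary (Coxeter elements $\leftrightarrow$ acyclic orientations, rotation $\leftrightarrow$ source-to-sink flip, circulation around each cycle as the flip invariant) are sound and match the paper's setup, and the flip-connectivity statement you leave to an induction is a citable theorem of Pretzel --- though your sketch of that induction is itself incomplete, since ``peeling off leaves'' terminates at a leafless trunk and says nothing about, e.g., a bare cycle, which needs a separate argument. But the decisive problem is that you stop exactly at the hard implication: you explicitly leave open the claim that conjugate Coxeter elements have equal cycle circulations, and the route you propose for it --- showing that the characteristic polynomial of the reflection representation, ``or a suitable refinement,'' separates orientations with different circulations --- is not a proof strategy so much as a restatement of the theorem. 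Asking a degree-$|S|$ polynomial (or an unspecified refinement) to encode the circulation around every cycle of an arbitrary graph is exactly the separation the theorem asserts, and there is no evident reason the spectrum is fine enough; nothing in your outline makes this plausible, let alone proves it.

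The paper closes this gap by an entirely different mechanism, which is worth internalizing because it sidesteps invariants altogether. Given $w' = uwu^{-1}$, it considers $(w')^k = u\,w^k\,u^{-1}$ for $k$ large. The word $(w')^k$ has the intervening neighbours property, and a theorem of Speyer (powers of Coxeter elements in infinite irreducible Coxeter groups are reduced) shows it is reduced; trees are dispatched separately since there all Coxeter elements form a single rotation class, so one may assume the graph is a connected non-tree and the group infinite. The deletion property then reduces $u\,w^k\,u^{-1}$ by at most $\ell(u)$ pair-deletions, so for $k > \ell(u)$ a reduced form $u_1 w u_2$ survives containing an intact copy of $w$. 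Finally, the paper's Proposition 3.5 shows that for such words all reduced expressions are related by commutations and limb-braid moves, and that every one induces an edge orientation in a single reachability class; reading the right-to-left orientation process just after the $w$ block shows the orientation of $w$ lies in the reachability class of $(w')^k$, hence of $w'$. Thus conjugacy is converted into the coexistence of $w$ and $w'$ inside reduced words for one long element, with no spectral input anywhere. Your proposal, as it stands, has the theorem's crux unproven and aimed at an approach unlikely to succeed.
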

\noindent
We stated this result at the FPSAC meeting in 1994, but gave proofs only
for the two important special cases when the Coxeter graph is a tree or a 
cycle (covering all finite and affine groups). For these special cases,
the result has since been rediscovered by Shi  \cite{Shi}, who
extended it to cycles with trees attached. Here we present the first proof of
the general result.

\section{Edge orientations and chip-firing}
\noindent
For a graph $G$, an {\em acyclic edge orientation}
is an assignment of directions to all edges, such that the resulting digraph
is acyclic.  This is always possible.  A simple observation is that the
resulting digraph contains at least one {\em sink}, i.e.~a vertex with no
outgoing edges.

If each arrowhead is detached and pronounced a chip, we get a distribution
of chips on the vertices and can play the {\em chip-firing game}
introduced in \cite{BLS}.
Translated into edge orientations, a legal move consists in
choosing a sink and firing it, that is changing it into a source by reversing 
all its edges.  
Since neither sinks nor 
sources belong to any cycles, the graph will still be acyclic and contain
a sink, so the game goes on forever. 

Several authors have rediscovered and analysed this
edge reorientation game. 
All the following facts are consequences of Th.1 in \cite{Pretzel}.
\begin{propo}\label{playback}
  If a vertex $s$ is fired in an acyclic edge orientation, 
  there is a continuation in which {\em every other 
  vertex is fired exactly once}. Such a game sequence restores the
  original edge orientation.  
\end{propo}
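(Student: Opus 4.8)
The plan is to split the statement into an easy parity observation and one substantive point. The parity observation is that firing every vertex exactly once restores the orientation: each edge $\{u,w\}$ is reversed precisely when one of its endpoints is fired, so if both $u$ and $w$ are fired exactly once the edge is reversed exactly twice and returns to its original direction. Consequently, the moment I exhibit a \emph{legal} game in which $s$ and every other vertex is fired exactly once, the resulting orientation must agree with the original one edge by edge. So the whole content lies in producing such a legal firing sequence, and I will concentrate on that.

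To produce the sequence I would commit in advance to a linear extension of the acyclic order rather than firing greedily. Since the orientation is acyclic, I can choose a bijection $\ell : V \to \{1,\dots,n\}$ with the property that every directed edge $u \to v$ satisfies $\ell(u) > \ell(v)$; because $s$ is a sink it is a minimal element of this order, so I may arrange $\ell(s) = 1$. The proposal is then to fire the vertices in order of increasing level, namely $s = \ell^{-1}(1),\, \ell^{-1}(2),\, \dots,\, \ell^{-1}(n)$.

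The key step, and the only one needing genuine verification, is that each move in this order is legal, i.e.\ that when it is $v$'s turn the vertex $v$ has become a sink. At that moment the set of already-fired vertices is $F = \{w : \ell(w) < \ell(v)\}$, and by the parity remark the current orientation differs from the original exactly on the edges joining $F$ to its complement. I would then inspect the edges at $v$: an edge to a lower-level (already fired) neighbour originally pointed out of $v$ and has been reversed once, so it now points into $v$; an edge to a higher-level (not yet fired) neighbour originally pointed into $v$ and is unchanged. In either case every edge at $v$ points into $v$, so $v$ is a sink and may legally be fired. Together with the parity remark this yields the proposition.

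I expect this legality check to be the main obstacle, and the decisive idea to be the choice of firing order. The naive strategy of firing any available sink outside $F$ fails, because an acyclic orientation need not possess a sink outside an arbitrary fired set; fixing a linear extension of the original order beforehand is exactly what guarantees that the required sink appears at each stage. The degenerate cases (a single vertex, or isolated vertices) should need no separate treatment, as they fire trivially.
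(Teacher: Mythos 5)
Your proof is correct. The linear extension exists because the orientation is acyclic, and $s$, being a sink, is minimal, so you may take $\ell(s)=1$; your cut-parity computation correctly verifies that each vertex is a sink when its turn comes (edges to already-fired, lower-level neighbours have been reversed once and point in, edges to unfired, higher-level neighbours pointed in originally and are untouched), and restoration follows since every edge is reversed exactly twice. This is, in essence, the paper's proof made explicit: the paper argues by induction on the number of vertices (fire $s$, then apply the induction hypothesis to the rest), and unwinding that induction produces exactly a firing order that repeatedly removes a minimal vertex of the original acyclic order, i.e., a linear extension with $s$ first. What your version buys is a self-contained, non-inductive verification: the paper's one-line induction leaves the legality-lifting step (that a sink of $G-s$ under the restricted orientation is still a sink of $G$ once $s$ has fired) implicit, whereas your parity argument checks legality edge by edge.

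One side remark in your closing paragraph is wrong, though nothing in the proof depends on it. You assert that the naive strategy of firing any currently available sink fails because an acyclic orientation need not have a sink outside an arbitrary fired set. But the fired sets arising in a legal play are not arbitrary: if $F$ is the set of vertices fired so far (each exactly once) and $v \notin F$, then every edge between $v$ and a fired neighbour $u$ currently points \emph{into} $v$ --- at the moment $u$ fired, that edge had to point into $u$ (and it was still in its original direction, since $v$ has never fired), and firing $u$ reversed it. Hence any sink of the original orientation restricted to $V \setminus F$ is a sink of the current orientation, so the greedy strategy never gets stuck and yields the proposition just as well. Committing to a linear extension in advance is a clean way to organize the argument, but it is not forced, and you should not present its necessity as the decisive point.
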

\begin{proof}
  Induction over the number of vertices proves the proposition:
  After firing  $s$, use the induction hypothesis
  to fire all remaining nodes.  The base case is trivial as is the
  restoration of original orientation. 
\end{proof}
\begin{coro}
 There is a play sequence from $u$ to $v$ if and only if there is a play
 sequence from $v$ to $u$.
\end{coro}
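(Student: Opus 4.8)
The plan is to reduce the whole statement to the reversibility of a \emph{single} firing, and then reverse an entire play sequence step by step. Since the assertion is symmetric in $u$ and $v$, it suffices to prove one implication: that a play sequence from $u$ to $v$ yields a play sequence from $v$ to $u$. The converse is then the same statement with the roles of $u$ and $v$ interchanged.

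First I would observe that Proposition~\ref{playback} already contains exactly the single-step reversal we need. Suppose firing a sink $s$ turns an acyclic orientation $O$ into $O'$. Applying the proposition to this firing, there is a continuation \emph{starting from} $O'$ in which every vertex other than $s$ is fired exactly once, and which restores $O$. Thus $O'$ can be legally played back to $O$; in other words, any single firing can be undone by a (longer) legal play sequence. Here the only point to verify is that $O'$ is again acyclic and has a sink so that the playback continuation consists of legal moves, but this is guaranteed by the remark that sinks and sources lie on no cycle.

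Next I would take a play sequence from $u$ to $v$ and write it as a chain of single firings, $u = O_0 \to O_1 \to \cdots \to O_k = v$. Reversing the last firing via the observation above produces a legal play sequence from $O_k$ to $O_{k-1}$; reversing the next-to-last firing produces one from $O_{k-1}$ to $O_{k-2}$; and so on down to $O_0$. Concatenating these pieces gives a single legal play sequence from $v = O_k$ back to $u = O_0$, as required. I expect no serious obstacle: all of the content sits in the single-step reversal supplied by Proposition~\ref{playback}, and the passage from one step to the whole chain is routine bookkeeping (a short induction on the length $k$ of the sequence).
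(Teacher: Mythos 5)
Your proof is correct and follows exactly the paper's route: the paper likewise reduces to inverting a single firing ("if a single move can be inverted, so can a sequence of moves") and then invokes Proposition~\ref{playback} to supply the playback continuation. Your version merely spells out the chain-reversal bookkeeping that the paper leaves implicit.
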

\begin{proof}
If a single move can be inverted, so can a sequence of moves.  Thus, it is
  sufficient to consider the case when $v$ is the result of firing a single
  vertex in position $u$, so the proposition applies.
\end{proof}

\noindent
According to this result, reachability of positions in this game constitutes 
an equivalence
relation that partitions acyclic edge orientations into {\em reachability
classes}.
For many graphs, it is now a rather simple matter to enumerate acyclic edge
orientations and reachability classes.  Two basic cases are covered by
our next proposition.
\begin{propo}\label{reachability} 
  For a tree with $n$ nodes, there are $2^{n-1}$ acyclic edge orientations
  but only one reachability class.  For an $n$-cycle, there are $2^n-2$
  acyclic edge orientations and $n-1$ reachability classes of sizes
  $\binom{n}{1},\ldots, \binom{n}{n-1}$.
\end{propo}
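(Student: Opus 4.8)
The plan is to treat the two graph families separately, since in both cases counting the acyclic orientations is immediate and the real content lies in counting the reachability classes. Throughout I will use freely that, by the Corollary, reachability is a symmetric relation, so the equivalence is generated by the operation ``reverse all edges at a local extremum'' (a sink \emph{or} a source): I may apply firing moves and their inverses interchangeably.

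For a tree, every orientation is acyclic (the underlying graph has no cycle to orient consistently), so there are exactly $2^{n-1}$ of them, one for each choice of direction on the $n-1$ edges. To see that they form a single reachability class I would induct on $n$. The base case $n=1$ is empty. For the step, pick a leaf $\ell$ with neighbour $p$ and set $T'=T\setminus\ell$. The crucial observation is that a leaf is \emph{always} a local extremum, so its single edge can be toggled in either direction without touching any other edge. Given two orientations $O_1,O_2$, I first run the inductive flip-sequence connecting $O_1|_{T'}$ to $O_2|_{T'}$ inside $T'$; the only step that interferes with $\ell$ is a flip of $p$, and whenever I need to flip $p$ I first toggle the leaf edge so that $p$ genuinely becomes a local extremum in $T$, perform the flip, and then toggle the leaf back. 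After matching the $T'$-part I toggle $\ell$ once more to match $O_2$, proving reachability.

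For the $n$-cycle, the only orientations that are \emph{not} acyclic are the two in which all edges run the same way around, so there are $2^n-2$ acyclic ones. Now fix an orientation and let $k$ be the number of edges pointing clockwise; acyclicity forces $1\le k\le n-1$. A direct check shows that firing a sink leaves $k$ unchanged: at a sink the incoming clockwise edge becomes outgoing while the incoming counterclockwise edge becomes the new clockwise one, so one clockwise edge is lost and one gained. Hence $k$ is a reachability invariant, giving at least $n-1$ classes, and since the orientations with a given $k$ are exactly the $\binom{n}{k}$ ways of choosing which edges point clockwise, the announced sizes will follow once I show that each value of $k$ forms a single class.

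The completeness step is where I expect the main difficulty. Reading the edges cyclically as a binary word (clockwise/counterclockwise), firing a sink is precisely the move that swaps an adjacent \emph{clockwise--counterclockwise} pair into \emph{counterclockwise--clockwise}; together with its inverse this is an arbitrary adjacent transposition of two unequal letters. Since such transpositions connect all binary words of a fixed content, all $\binom{n}{k}$ orientations with the same $k$ lie in one class, completing the cycle case. The analogous hard point in the tree case is the coordination at the vertex $p$; both are handled by exploiting that reachability is a genuine equivalence relation, so that inverse moves are available.
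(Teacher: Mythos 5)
Your proof is correct and takes essentially the same route as the paper: for the tree, induction on a leaf whose edge can always be toggled so as not to obstruct flips at its neighbour (the paper phrases this as ``firing the leaf when necessary'' in forward play, while you invoke inverse moves via the Corollary --- the same mechanism), and for the cycle, the invariance of the number of clockwise edges together with the observation that firing a sink is an adjacent swap of oppositely oriented edges, which is exactly the paper's ``moving the anti-clockwise arrow one step forward''. Your write-up merely makes explicit two points the paper leaves implicit, namely that $k$ is a firing invariant and that such swaps connect all $\binom{n}{k}$ configurations of fixed content.
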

\begin{proof}
  An $n$-vertex tree has got $n-1$ edges with no restrictions on orientations,
  for all directed trees are acyclic.
  The statement that all edge orientations are reachable from each other has 
  a simple induction proof: choose a leaf, play to give the rest of the
  tree the desired orientation, firing the leaf when necessary, finally
  fire the chosen leaf once more if needed to reorient its edge. 

  For an $n$-cycle, exactly two orientations are forbidden, namely all $n$
  clockwise or all $n$ anti-clockwise.  Consider the $\binom{n}{k}$ orientations 
  with $k$ anti-clockwise edges.  Firing a node may be seen as moving the 
  anti-clockwise arrow one step forward, e.g. 
  \begin{picture}(68,5)(-1,0)
    \put(13,3){\vector(-1,0){11}}
    \put(17,3){\vector( 1,0){11}}
    \put(43,3){\vector(-1,0){11}}
    \put(58,3){\vector(-1,0){11}}
    \put( 0,3){\circle*{2}}
    \put(15,3){\circle*{2}}
    \put(30,3){\circle*{2}}
    \put(45,3){\circle*{2}}
    \put(60,3){\circle*{2}}
  \end{picture}
  to
  \begin{picture}(64,5)(-1,0)
    \put(13,3){\vector(-1,0){11}}
    \put(28,3){\vector(-1,0){11}}
    \put(32,3){\vector( 1,0){11}}
    \put(58,3){\vector(-1,0){11}}
    \put( 0,3){\circle*{2}}
    \put(15,3){\circle*{2}}
    \put(30,3){\circle*{2}}
    \put(45,3){\circle*{2}}
    \put(60,3){\circle*{2}}
  \end{picture}

It is obvious that any position with $k$ anti-clockwise arrows can be reached
in this way. 
\end{proof}

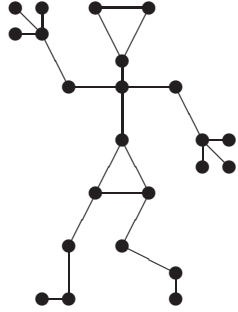
\begin{figure}[htb]
 \begin{picture}(80,110)(-40,0)
   
    \put(-30,0){\circle*{5}}
    \put(-20,0){\circle*{5}}
    \put(20,0){\circle*{5}}
    \put(-20,20){\circle*{5}}
    \put(0,20){\circle*{5}}
    \put(20,10){\circle*{5}}
    \put(-10,40){\circle*{5}}
    \put(10,40){\circle*{5}}
    \put(0,60){\circle*{5}}
    \put(0,80){\circle*{5}}
    \put(0,90){\circle*{5}}
    \put(-20,80){\circle*{5}}
    \put(20,80){\circle*{5}}
    \put(-10,110){\circle*{5}}
    \put(10,110){\circle*{5}}
    \put(-30,100){\circle*{5}}
    \put(-40,100){\circle*{5}}
    \put(-30,110){\circle*{5}}
    \put(-40,110){\circle*{5}}
    \put(30,60){\circle*{5}}
    \put(40,60){\circle*{5}}
    \put(30,50){\circle*{5}}
    \put(40,50){\circle*{5}}
    \put(-30,0){\line(1,0){10}}
    \put(-20,0){\line(0,1){20}}
    \put(-20,20){\line(1,2){10}}
    \put(-10,40){\line(1,0){20}}
    \put(20,0){\line(0,1){10}}
    \put(20,10){\line(-2,1){20}}
    \put(0,20){\line(1,2){10}}
    \put(-10,40){\line(1,2){10}}
    \put(10,40){\line(-1,2){10}}
    \put(0,60){\line(0,1){30}}
    \put(0,90){\line(-1,2){10}}
    \put(0,90){\line(1,2){10}}
    \put(-10,110){\line(1,0){20}}
    \put(-20,80){\line(1,0){40}}
    \put(-20,80){\line(-1,2){10}}
    \put(20,80){\line(1,-2){10}}
    \put(-30,100){\line(-1,0){10}}
    \put(-30,100){\line(-1,1){10}}
    \put(-30,100){\line(0,1){10}}
    \put(30,60){\line(1,0){10}}
    \put(30,60){\line(1,-1){10}}
    \put(30,60){\line(0,-1){10}}
  \end{picture}
\caption{A trunk with four limbs and three joints. }
\end{figure}

\noindent 
A connected graph that is not a tree may be decomposed uniquely as a 
leafless {\em trunk} (the subgraph obtained by successive removal of
leaves until none are left) and a collection of
{\em limbs}, defined as trees that connect to the trunk at one
vertex only, called a {\em joint}.
We may then apply the same induction argument as we used for trees 
to obtain the following useful result. 
\begin{propo}\label{limbs}
In an acyclic edge orientation, if edges on limbs are arbitrarily
redirected, the result is a new acyclic orientation in the same reachability
class.
\end{propo}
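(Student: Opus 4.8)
The plan is to establish the two halves of the statement separately — acyclicity and membership in the same reachability class — and to reduce the second to reversing one limb edge at a time.

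I would begin with the observation that drives everything: every edge of a limb is a \emph{bridge}. A limb is a tree meeting the trunk only at its joint, so deleting any one limb edge detaches the piece hanging beyond it from the joint and hence from the whole graph. A bridge lies on no cycle, so reversing bridges can never create a directed cycle; thus an arbitrary redirection of limb edges is automatically acyclic, which disposes of the first half. For the second half, since reachability is an equivalence relation (the corollary to Proposition~\ref{playback}) and redirections of disjoint edge sets compose, it is enough to show that reversing a \emph{single} limb edge stays within the class: one then flips the disagreeing edges one by one, each intermediate orientation being acyclic by the bridge remark.

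The core step is therefore to reverse one limb edge $e$ without leaving the class. Let $B$ be the subtree that $e$ cuts off from the joint, i.e.\ the component of the graph minus $e$ lying on the far side of $e$; all vertices of $B$ are limb vertices, and $e$ is the only edge joining $B$ to the rest of the graph, meeting $B$ in a single vertex $y$. I would work in whichever of the two orientations of $e$ points \emph{into} $B$. Then I play the game inside $B$ alone: by Proposition~\ref{playback} applied to the tree $B$, one can fire a sink of $B$ and continue until every vertex of $B$ has been fired exactly once, which restores the orientation of $B$. Firing each vertex of $B$ once reverses every internal edge of $B$ twice and reverses $e$ exactly once (only its endpoint $y$ lies in $B$), so the net effect on the whole graph is to reverse $e$ and nothing else. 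The opposite orientation of $e$ then lies in the same class as well, by the symmetry of reachability.

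The point that needs care — and what I expect to be the main obstacle — is that this \emph{subtree} playback must lift to a \emph{legal} game on the full graph and must leave the trunk completely untouched; this is exactly where the naive leaf-peeling recipe used for trees breaks down, since at the joint it would call for firing the joint and thereby disturbing the trunk. The repair is to notice that the only vertex of $B$ incident to an edge outside $B$ is $y$, and that $e$ keeps pointing into $y$ until the single moment $y$ is fired; hence a vertex of $B$ is a sink of the whole graph precisely when it is a sink of $B$, so the $B$-legal firing order is legal in the full graph, and no trunk vertex (in particular no joint) is ever fired. A short parity check confirms that ``fire every vertex of $B$ once'' is forced: the firing vector producing the single reversal of $e$ must be constant on the connected tree $B$ and equal to $1$ at $y$, hence equal to $1$ throughout $B$. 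Assembling these single-edge reversals, and running the argument limb by limb, yields the proposition.
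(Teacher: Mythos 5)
Your proof is correct, but it takes a genuinely different route from the paper. The paper gives essentially no separate argument: it says Proposition~\ref{limbs} follows by ``the same induction argument as we used for trees'' in Proposition~\ref{reachability} --- peel a leaf (necessarily a limb vertex, since the trunk is leafless), play to give the remaining smaller graph the desired orientation by induction, firing the peeled leaf whenever it blocks a move, and finally fire it once more if its own edge needs reorienting; the induction bottoms out at the bare trunk, where nothing is fired, so trunk edges are never disturbed. You replace this global induction by a modular single-edge mechanism: every limb edge $e$ is a bridge (which also gives you acyclicity for free), and you reverse $e$ alone by firing each vertex of the far-side subtree $B$ exactly once, so that internal edges of $B$ flip twice and $e$ flips once --- the firing vector is the indicator of the cut side $B$. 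Your legality lift is the right and necessary observation (while $e$ points into $B$, a vertex of $B$ is a sink of the whole graph iff it is a sink of $B$, since only the endpoint $y$ touches the outside), and Proposition~\ref{playback} applied to the tree $B$ correctly supplies the firing order; symmetry of reachability (the corollary) then covers the other orientation of $e$, and composing flips, each intermediate orientation being acyclic by the bridge remark, finishes the proof. As for what each approach buys: the paper's version is nearly free given the tree case already proved and redirects all limb edges in one inductive sweep, whereas yours is longer but more informative --- it exhibits an explicit play that fires only limb vertices and never the trunk, isolates the cut/firing-vector principle behind each flip, and thereby anticipates Pretzel's circulation characterization of reachability classes cited at the end of the paper. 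Your closing parity check that the firing vector must equal $1$ on all of $B$ is correct but not needed for the proposition.
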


\section{Words with intervening neighbours}
\noindent
Let $G$ be the Coxeter graph of a Coxeter group with generators 
$S$. Consider a word $w$ in the alphabet $S$. If there is an edge between
$s$ and $t$ and if the first occurrence of $s$
in $w$ precedes the first occurrence of $t$, we orient the edge like
$s\!\rightarrow\! t$. In this way we edge-orient the subgraph of $G$ spanned 
by the letters in $w$.

Now, consider this edge orientation as a right-to-left process on the
word $w$. The rightmost letter orients no edge, the two rightmost letters
orient the edge between the corresponding vertices (if there is one) and the 
larger the segment, the more edges get 
oriented. When a letter $t$ reappears, we may have to reverse some arrows 
$s\!\rightarrow \! t$, namely when the new situation is 
$t\cdots s \cdots t \cdots$, and if all $t$-neighbours occur in between the 
first and the second $t$, this will be a chip-firing move. 
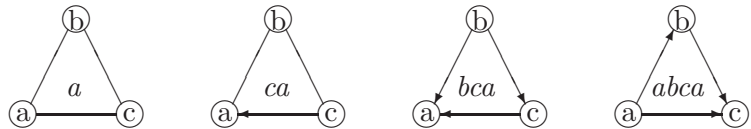
\begin{figure}[htb]

 \begin{picture}(50,35)(-9,0)
    \put(0,0){\circle{10}}
    \put(-2.6,-2.6){a}
    \put(5,0){\line(1,0){30}}
    \put(40,0){\circle{10}}
    \put(37.4,-2.6){c}
    \put(3,4){\line(1,2){14}}
    \put(20,36){\circle{10}}
    \put(17,31.8){b}
    \put(37,4){\line(-1,2){14}}
    \put(16,7){\it a}
  \end{picture}
\qquad
 \begin{picture}(50,35)(-9,0)
    \put(0,0){\circle{10}}
    \put(-2.6,-2.6){a}
    \put(35,0){\vector(-1,0){30}}
    \put(40,0){\circle{10}}
    \put(37.4,-2.6){c}
    \put(3,4){\line(1,2){14}}
    \put(20,36){\circle{10}}
    \put(17,32){b}
    \put(37,4){\line(-1,2){14}}
    \put(14,7){\it ca}
  \end{picture}
\qquad
 \begin{picture}(50,35)(-9,0)
    \put(0,0){\circle{10}}
    \put(-2.6,-2.6){a}
    \put(35,0){\vector(-1,0){30}}
    \put(40,0){\circle{10}}
    \put(37.4,-2.6){c}
    \put(17,32){\vector(-1,-2){14}}
    \put(20,36){\circle{10}}
    \put(17,32){b}
    \put(23,32){\vector(1,-2){14}}
    \put(11,7){\it bca}
  \end{picture}
\qquad
 \begin{picture}(50,35)(-9,0)
   
    \put(0,0){\circle{10}}
    \put(-2.6,-2.6){a}
    \put(5,0){\vector(1,0){30}}
    \put(40,0){\circle{10}}
    \put(37.4,-2.6){c}
    \put(3,4){\vector(1,2){14}}
    \put(20,36){\circle{10}}
    \put(17,32){b}
    \put(23,32){\vector(1,-2){14}}
    \put(8,7){\it abca}
  \end{picture}
\caption{Successive edge orientation by the word $abca$ }
\end{figure}

\begin{definition}
A word has the 
{\em intervening neighbours} property if any two occurrences of
the same letter are separated by all its graph neighbours.
\end{definition}

\noindent
If $w$ has this property and if all letters of $S$ occur in $w$, 
eventually the right-to-left process will have oriented all edges
in $G$, so giving such a word $w$ is equivalent to giving an initial
edge orientation and a play sequence.

If two words, $w$ and $w^\prime$, represent the
the same group element, are the corresponding edge orientations 
necessarily in the same reachability class? For a leafless graph the 
answer is yes. 

\begin{propo}\label{intervening}
Let $G$ be a trunk without limbs and let $w$ be a word with the intervening 
neighbours property in which all letters of $S$ occur. Then $w$ is a
reduced word for the group element it represents, all reduced
words for this element are obtained by commutations in $w$ and all edge 
orientations defined by these words belong to the same reachability class.
\end{propo}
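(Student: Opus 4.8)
The plan is to reduce all three assertions to a single combinatorial fact: \emph{starting from $w$, no braid move can ever be applied}, so that the only elementary moves available are commutations. I will lean on Tits' solution of the word problem, which says that a word is reduced precisely when no sequence of braid moves (the substitutions $\underbrace{sts\cdots}_{m(s,t)} \leftrightarrow \underbrace{tst\cdots}_{m(s,t)}$, commutations being the case $m=2$) produces a word with two equal adjacent letters, and that any two reduced words for the same element are connected by such moves.

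The two lemmas driving everything are: (A) a commutation preserves the intervening-neighbours property, and (B) in a leafless graph a word with that property admits no braid move at all. For (A) I observe that a commutation swaps two adjacent letters $a,b$ with no edge between them, so a neighbour of a letter $s$ can never be moved across an occurrence of $s$ (that would require commuting two \emph{adjacent} generators); hence for every letter the collection of neighbours sitting between two consecutive occurrences is unchanged. For (B), any braidable factor $\underbrace{sts\cdots}_{m}$ with $m\ge 3$ begins with $s\,t\,s$, i.e.\ two consecutive occurrences of $s$ separated only by $t$; since every vertex of a trunk has degree at least $2$, some other neighbour of $s$ is missing from between them, contradicting the intervening-neighbours property. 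The same reasoning forbids the factor $ss$.

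Combining (A) and (B) by induction on the length of a move sequence, every word reachable from $w$ by braid moves is in fact reachable by commutations alone and still has the intervening-neighbours property; in particular none of them contains two equal adjacent letters. By Tits' criterion this makes $w$ reduced, which is the first assertion. For the second, Tits' theorem says every reduced word for the element is braid-connected to $w$; but the reachable set is exactly the commutation class of $w$, so all reduced words arise from $w$ by commutations. The third assertion is then immediate: a commutation of two non-adjacent letters does not alter the relative order of the first occurrences of the endpoints of any \emph{edge} $\{s,t\}$, so it leaves the induced edge orientation unchanged; hence all reduced words define the very same orientation, trivially one reachability class.

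I expect the only real work to be the careful bookkeeping in Lemma (A) --- checking that the set of intervening neighbours between consecutive occurrences is genuinely preserved, including the boundary case where one of the commuted letters equals $s$ --- together with pinning down the exact form of Tits' word-problem statement being invoked. A root-theoretic alternative for reducedness (tracking the roots $\beta_j = s_1\cdots s_{j-1}(\alpha_{s_j})$ and showing each stays positive, which again forces using degree $\ge 2$) is available, but the braid-move argument seems the most economical and makes the second and third assertions fall out for free.
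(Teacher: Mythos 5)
Your proof is correct and takes essentially the same route as the paper's: since every trunk vertex has degree at least two, the intervening-neighbours property forbids all braid moves (and $ss$ factors), commutations preserve the property and leave the edge orientation literally unchanged, whence reducedness and the description of the commutation class follow. The only difference is cosmetic --- you spell out the appeal to Tits' solution of the word problem that the paper's three-line proof leaves implicit.
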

\begin{proof}
When two occurrences of the same letter are separated by two or more
neighbours, no braid relations such as $sts=tst$ apply, so commutations 
are the only applicable rewriting rules. Commutations preserve the 
intervening neighbours property and no reduction is possible. Nor do 
commutations affect the edge orientation.
\end{proof}
Now let $G$ be a general graph, regarded as a trunk with limbs. 
It is no longer true that the
intervening neighbours property is an invariant under rewritings ---
for example, if $s$ is a leaf connected to the trunk vertex $t$, 
the braid transformation $sts=tst$ will produce two occurrences of 
$t$ with only one intervening neighbour. It turns out that only limb 
letters are involved in braid transformations and that the intervening 
neighbours property stays true for the other trunk letters, with a 
slight modification for the joints
(vertices in which a limb connects with the trunk). Obviously, 
trunk letters never occur in higher braid transformations like $stst=tsts$. 

The following lemma states properties that are true for a 
word with the intervening neighbours property and which stay true
under rewritings.
\begin{lemma}\label{invariants}
The following word properties are invariant under commutations $st=ts$ and
braid transformations $sts=tst$.
\begin{itemize}
\item The intervening neighbours property holds for trunk letters that
are not joints, i.e.~any two occurrences of such a letter are separated by
all its neighbours.
\item Two occurrences of the same joint are either separated by all its 
trunk neighbours (it has at least two) or by no trunk neighbour (but by
at least one limb neighbour). 
\end{itemize}
\end{lemma}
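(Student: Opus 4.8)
The plan is to prove invariance of these two properties move by move, since commutations and braid transformations generate all rewritings of a word representing a fixed group element. For a commutation $st=ts$, the two swapped letters $s,t$ are non-adjacent in $G$, so the swap cannot move a letter across either of its own occurrences and cannot change the set of letters intervening between two occurrences of any third letter; hence both properties are trivially preserved, and I expect to dispose of this case quickly. The real content is in the braid transformation, where I would first establish the structural claim flagged in the text just before the lemma: that the two letters $s,t$ participating in any applicable braid move $sts=tst$ cannot both be non-joint trunk letters. Indeed, a non-joint trunk letter already satisfies intervening neighbours with all its (at least two) neighbours separating consecutive occurrences, which forbids the adjacency pattern $sts$; so at least one of $s,t$ must be a leaf-side letter, i.e.\ a limb letter or a joint.

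Granting that, I would analyze the braid move $sts \leftrightarrow tst$ by cases according to which of the two vertices is a non-joint trunk letter, a joint, or a non-joint limb letter, and track what happens to each of the two bulleted invariants. For the first bullet, a non-joint trunk letter $r$ distinct from $s,t$ keeps all its occurrences in fixed relative position unless $r\in\{s,t\}$; since we argued $r$ cannot be the braiding partner, the only thing to check is that the local rewrite $\cdots sts\cdots \to \cdots tst\cdots$ does not create two occurrences of such an $r$ with a neighbour missing between them, which follows because $r$'s occurrences and the set of letters between any two of them are unchanged by a rewrite confined to the $\{s,t\}$ block that does not contain $r$. For the second bullet I would examine a joint $j$: the braid move creates a new adjacent pair of $j$'s precisely in the situation the preceding paragraph describes (a leaf $s$ attached to joint $t{=}j$, where $sts\to tst$ yields two $t$'s separated by the single limb neighbour $s$). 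This is exactly the permitted alternative in the lemma — two occurrences of a joint separated by no trunk neighbour but by at least one limb neighbour — so rather than violating the invariant, the braid move toggles a joint between its two allowed states, and I must verify that both directions of the move land in one of the two sanctioned configurations and never in a forbidden intermediate one.

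The main obstacle I anticipate is the bookkeeping for the joints: I must confirm that when a joint's two occurrences are separated by all its trunk neighbours, no braid move can be applied that would leave them separated by \emph{some but not all} trunk neighbours, and conversely that the limb-only-separation state is stable except for the braid toggle just described. Concretely, I would argue that a braid move $sts=tst$ touching a joint $j$ necessarily has the limb letter $s$ as one partner (since the partner must be adjacent to $j$ and, by the first invariant applied to the other trunk neighbours of $j$, cannot be a trunk neighbour without those neighbours intervening and thereby blocking the $sts$ pattern), which pins down exactly how the joint's separators change and shows the second bullet's dichotomy is respected. Throughout I would rely on the observation stated in the text that trunk letters are never involved in higher braid transformations $stst=tsts$, so that the only moves to consider are ordinary commutations and length-three braids; this restricts the case analysis to the finite list above and makes the verification a matter of checking each local pattern rather than an unbounded argument.
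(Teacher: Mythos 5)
Your proposal is correct and takes essentially the same route as the paper's own (much terser) proof: commutations are dismissed as trivially harmless, and the two invariants are used self-referentially to show that any applicable braid move $sts=tst$ can only involve limb letters and at most one joint, after which the two bulleted properties are checked against the local rewrite case by case. The paper compresses all of this into three sentences, so your expanded case analysis --- in particular pinning down that a braid touching a joint must have a limb letter as its partner, and tracking the joint's toggle between its two sanctioned separation states --- is a faithful elaboration of the same argument rather than a different one.
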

\begin{proof}
Invariance under commutations is trivial. A braid transformation must
involve two limb letters (one of which may be a joint) so the first
property stays true. If $s$ is a joint or $t$ is a joint, the second
property still stays true after $sts=tst$. 
\end{proof}
\noindent
We are now almost ready to extend Prop.~\ref{intervening} to trunks with
limbs. The missing piece was provided by David Speyer \cite{Speyer}.
\begin{lemma}
[Speyer, 2008] For infinite irreducible Coxeter groups, all words with the 
intervening neighbours property are reduced.
\end{lemma}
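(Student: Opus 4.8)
The plan is to pass to the geometric (Tits) representation of $W$ on the real space $V=\bigoplus_{s\in S}\mathbf{R}\,\alpha_s$, carrying the symmetric bilinear form $B$ with $B(\alpha_s,\alpha_t)=-\cos(\pi/m_{st})$, in which each generator acts as the reflection $v\mapsto v-2B(v,\alpha_s)\alpha_s$. The reducedness criterion I would use is the standard one: $w=s_1\cdots s_k$ is reduced if and only if all of its \emph{inversion roots} $\beta_j=s_1\cdots s_{j-1}(\alpha_{s_j})$ are positive, and since every root is either positive or negative, the lemma becomes the single assertion that no $\beta_j$ ever crosses into the negative cone. First I would pin down what the intervening neighbours property gives at one step: when $s_j=s$ is a \emph{repeated} letter, the property guarantees that every neighbour of $s$ has been read since its previous occurrence, which is exactly the paper's observation that the reappearance of $s$ is a legal chip-firing (sink-firing) move. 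I would turn this into the analytic statement that in precisely this regime the reflection in $\alpha_s$ carries the running inversion root further into the positive cone rather than folding it back across its own wall, so that the IN property is, step by step, a positivity-preserving condition.

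Second, I would organise the word into sweeps. By the right-to-left orientation process above, together with the chip-firing correspondence, an IN word in which all letters occur encodes an initial acyclic orientation and a play sequence; a play that fires every vertex once restores the orientation (Proposition~\ref{playback}) and, on the group side, multiplies by a Coxeter element $c$. Hence the inversion roots of $w$ are, apart from the finitely many roots of a single initial sweep, the forward orbit $c^{m}\gamma$ of the manifestly positive inversion roots $\gamma$ of one Coxeter word, and the lemma reduces to showing that the forward $c$-orbit of a positive root never turns negative. This is where \emph{infinite} and \emph{irreducible} are indispensable: irreducibility makes the nonnegative matrix attached to $B$ irreducible, so Perron--Frobenius applies, and infiniteness pushes its dominant eigenvalue past the finite threshold. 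Concretely $c$ acquires a positive real eigenvalue $\lambda\ge 1$ with eigenvector $v_{+}$ in the closed positive cone; in the indefinite (non-affine) case $\lambda>1$ and $v_{+}$ lies strictly inside it. I would use $v_{+}$, or the strictly dominant vector $\rho$ with $B(\rho,\alpha_s)=1$ supplied by the nondegeneracy of $B$, to manufacture a quantity that is monotone along the orbit and certifies that each $c^{m}\gamma$ stays in the positive cone.

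The hard part, and the reason the statement is delicate, is upgrading the merely \emph{asymptotic} pull of the orbit toward the expanding direction $v_{+}$ into genuine \emph{step-by-step} positivity: a priori an inversion root could dip momentarily into the negative cone before being carried back out, and excluding this is exactly where the single-step IN inequality of the first paragraph must be fed into the global dynamics. I expect the affine case to need separate handling, since there $B$ is only positive semidefinite, $\lambda=1$, the strictly dominant $\rho$ fails to exist, and $v_{+}$ is the null imaginary root $\delta$ on the boundary of the cone, so the orbit converges to $\mathbf{R}_{>0}\,\delta$ instead of escaping and positivity must be extracted by projecting onto the finite part. Finally I would reconcile a general IN word with the clean Coxeter-element orbit: Lemma~\ref{invariants} shows the IN property survives on the trunk letters, and Proposition~\ref{limbs} lets the limb edges be redirected within one reachability class, so the only configurations left to verify are the genuinely Coxeter-element-like sweeps to which the eigenvalue argument applies. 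Assembling these pieces, no $\beta_j$ can be negative and $w$ is reduced.
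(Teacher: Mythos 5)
The paper gives no proof of this lemma at all: it is imported from Speyer \cite{Speyer}, with only the remark that $c$-admissible play sequences and intervening-neighbours words are equivalent notions. So your proposal must stand on its own, and as written it is a programme rather than a proof, because the step you yourself flag as ``the hard part'' --- upgrading the asymptotic attraction of the orbit $c^m\gamma$ toward the expanding eigendirection $v_+$ into positivity of \emph{every} inversion root at \emph{every finite} step --- is not an omitted detail but the entire content of the theorem. An eigenvalue $\lambda>1$ with eigenvector in the cone controls $c^m\gamma$ only for large $m$; nothing in your sketch excludes a root dipping into the negative cone at a finite stage (i.e.\ a cancellation in the word), and the ``single-step IN inequality'' that is supposed to be fed into the dynamics is never actually stated. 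The affine case, where $\lambda=1$, $v_+=\delta$ sits on the boundary of the cone and your candidate monotone quantity degenerates, is likewise explicitly deferred. There are two further gaps. First, the reduction of a general IN word to ``one initial sweep plus the forward $c$-orbit'' is false as stated: in an IN word the firing counts of distinct vertices may differ (by up to their graph distance), so the word need not be commutation-equivalent to $u\cdot c^m$; you would have to show that every admissible sequence extends to one with equal firing counts and then use that prefixes of reduced words are reduced --- and Lemma~\ref{invariants} and Prop.~\ref{limbs}, which you invoke here, concern invariance under rewriting and limb reorientation and do not supply this. Second, Perron--Frobenius applies to the nonnegative matrix attached to $B$, not to $c$ itself, which is not a nonnegative matrix in the simple-root basis; the spectral facts you assert about $c$ (real $\lambda\ge 1$, eigenvector in the closed cone, strictness off the affine case) are true but require their own nontrivial argument in the style of McMullen, which you do not give.

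For comparison, Speyer's published argument avoids the dynamics of $c$ altogether and obtains step-by-step positivity directly, which is exactly the piece your sketch lacks. Infiniteness and irreducibility enter through $B$, not through $c$: applying Perron--Frobenius to $I-B$ (nonnegative and irreducible since the graph is connected), $W$ is infinite precisely when there is a vector $\delta=\sum_s c_s\alpha_s$ with all $c_s>0$ and $B(\delta,\alpha_s)\le 0$ for every $s$, with equality in the affine case --- so affine and indefinite types are handled uniformly, with no case split. The intervening neighbours property is then exploited through the explicit one-step recursion it forces on inversion roots: when a letter $s$ recurs, each neighbour $t$ having occurred exactly once in between, the new root is $\beta'=\sum_{t\sim s}2\cos(\pi/m_{st})\,\beta_t-\beta$, where every coefficient $2\cos(\pi/m_{st})\ge 1$ because $m_{st}\ge 3$ on edges; an induction along the word, using this recursion together with the functional supplied by $\delta$, shows the roots grow monotonically and in particular stay positive at every step. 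Your instincts about where the hypotheses must enter are sound, but the working proof replaces your global spectral heuristic with this local monotonicity, and until you can prove some substitute for it your argument does not close.
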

\noindent
Speyer actually states the result for {c-admissible sequences}, that is 
valid play sequences from edge orientation $c$, but as we have noted,
the concepts are equivalent.

\begin{propo}\label{intervening2}
Let $G$ be any connected non-tree graph and let $w$ be a word 
with the intervening neighbours property in which all letters of $S$ occur. 
Then $w$ is a reduced word, all reduced words for this element are obtained 
by commutations in $w$ and braid transformations involving only limb letters,
and all edge 
orientations defined by these words belong to the same reachability class.
\end{propo}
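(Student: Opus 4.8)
The plan is to get reducedness from Speyer's Lemma and to control the set of reduced words and their orientations with Lemma~\ref{invariants} together with Propositions~\ref{limbs} and \ref{intervening}.

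First I would settle that $w$ is reduced. Because $G$ is connected the group is irreducible, and because $G$ is not a tree it contains a cycle; as every finite irreducible Coxeter group has an acyclic Coxeter diagram, our group is infinite. Speyer's Lemma therefore applies directly and tells us that $w$, having the intervening neighbours property, is reduced.

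Next I would describe the reduced words. The first thing to note is that $w$ satisfies the two invariant properties of Lemma~\ref{invariants}: the intervening neighbours property gives the first bullet for non-joint trunk letters, and for a joint it places all trunk neighbours between any two occurrences, which is the first alternative of the second bullet. By the Matsumoto--Tits theorem every reduced word for the element is joined to $w$ by a sequence of commutations and braid moves passing only through reduced words. Lemma~\ref{invariants} keeps the two invariant properties true all along the sequence, and, as in its proof, these properties force any braid move $sts=tst$ to use only limb letters: a non-joint trunk letter has at least two neighbours, which cannot all fit between the two $s$'s of $sts$, and a joint whose two occurrences are separated by the single letter $t$ must (second bullet) be separated by no trunk neighbour, so $t$ is a limb neighbour and $\{s,t\}$ is a limb edge. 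This already gives the second assertion.

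It remains to treat the reachability class. Commutations do not change the edge orientation, so only the limb braid moves matter. Since such a move alters the word only inside a block $sts\to tst$, the only first-occurrence order it can affect is that of $s$ against $t$; as that pair was just shown to span a limb edge, every trunk edge keeps its orientation. Hence all reduced words induce exactly the same orientation on the trunk --- here the trunk behaves precisely as in Proposition~\ref{intervening} --- and can differ only on limb edges. By Proposition~\ref{limbs} any two acyclic orientations that agree on the trunk lie in a single reachability class, which completes the proof. I expect the crux to be the claim that a braid move never touches a trunk edge; this is exactly where the joint clause of Lemma~\ref{invariants} is indispensable, since without it a braid relation at a joint could a priori reverse a trunk edge and break the argument.
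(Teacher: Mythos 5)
Your proof is correct and takes essentially the same route as the paper's: Speyer's lemma gives reducedness, Lemma~\ref{invariants} confines braid transformations to limb letters (hence to limb edges, leaving the trunk orientation fixed as in Prop.~\ref{intervening}), and Prop.~\ref{limbs} absorbs the limb-edge differences into a single reachability class. The only difference is presentational: you make explicit the appeal to the Matsumoto--Tits theorem and the justification that a non-tree connected diagram yields an infinite irreducible group, both of which the paper leaves implicit.
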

\begin{proof}
All non-tree Coxeter graphs define infinite groups, so Speyer's result 
applies.
Observe that, by Prop.~\ref{limbs}, the orientation of limb edges
is insignificant for reachability, and neither commutations nor braid 
transformations of limb letters influence the edge orientations in the
trunk. Hence, we may disregard all limb letters except for the joints. 
According to Lemma~\ref{invariants}, the intervening neighbours property with 
respect to the set of trunk letters that are not joints will hold under
rewriting. Joints may duplicate, but as there are only limb neighbours
between the duplicates, the argument in the proof of Prop.~\ref{intervening}
goes through with respect to the trunk letters.
When two occurrences of the same letter are separated by two or more
neighbours, no braid relations of type $sts=tst$ apply, so commutations 
are the only applicable rewriting rules. Commutations preserve the 
intervening neighbours property and no reduction is possible. Nor do 
commutations affect the edge orientation.
\end{proof}

\section{Coxeter elements}
\noindent
A Coxeter word has one instance of each letter, so it defines an
acyclic orientation of the Coxeter graph.
This orientation is in fact well defined by the Coxeter {\em element}, as 
all words representing the same element are obtainable by commutations.   
And in Prop.~\ref{playback} we noted that for every acyclic
orientation there is a play sequence in which all vertices are fired once,
i.e.~a corresponding Coxeter word.

So, Coxeter elements correspond bijectively to acyclic edge orientations 
of the Coxeter graph. And we have seen that moving the first letter of a 
Coxeter word to the end is the same as firing the corresponding vertex.
This proves the following proposition from \cite{EriEri}.
\begin{propo}\label{rotation}
  Rotation of Coxeter words induces an equivalence relation on the set of
  Coxeter elements, that corresponds precisely to the reachability relation
  on the set of acyclic edge orientations.
\end{propo}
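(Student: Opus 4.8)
The plan is to push everything through the bijection $\Phi$ between Coxeter elements and acyclic edge orientations established above, thereby reducing the statement to matching a single rotation with a single firing move. First, that rotation ``induces'' a relation on elements is almost formal: rotation and commutation together generate an equivalence relation on Coxeter \emph{words}, and since two Coxeter words represent the same element exactly when they differ by commutations, every rotation class of words is a union of commutation classes, hence a union of Coxeter elements. The relation therefore descends to a well-defined equivalence relation on Coxeter elements, inheriting reflexivity, symmetry and transitivity from the word level.

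The heart of the matter is the local dictionary. In a Coxeter word each edge is oriented from its earlier to its later endpoint, so the first letter is a source and the last letter a sink. Rotating $s_1\cdots s_n$ to $s_2\cdots s_n s_1$ moves $s_1$ from the front to the back, converting it from a source into a sink and reversing precisely the edges incident to it; this is the reversal of a source, i.e.\ the inverse of a single firing move. A commutation exchanges two non-adjacent letters and so does not touch the orientation at all. Reading a chain of rotations and commutations from $w$ to $w'$ through $\Phi$, the orientation changes only by edge-reversals at sources, and by reversibility of play (the corollary to Proposition~\ref{playback}) each such reversal keeps the orientation within one reachability class. Hence rotation-equivalent elements are carried to orientations in a common reachability class.

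For the converse I would realize an arbitrary firing move by legal word operations. Suppose $t$ is a sink of the orientation $\Phi(c)$. Then in every word for $c$ all neighbours of $t$ occur before $t$, so the letters to its right are non-neighbours and $t$ can be slid to the very end by commutations. Performing the inverse rotation—which is itself a rotation-equivalence, being $n-1$ ordinary rotations—moves $t$ to the front, turning it into a source and reversing exactly its edges; this is the firing of $t$. Because, again by the corollary, any two orientations in one reachability class are joined by a sequence of firing moves and their inverses, lifting the sequence step by step shows that elements whose orientations are mutually reachable are rotation-equivalent. Combined with the previous paragraph, this identifies the two relations under $\Phi$ and proves the proposition.

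The step I expect to require the most care is this last realization of a firing move. One must respect that a rotation literally reverses a \emph{source} whereas a firing move reverses a \emph{sink}, so the two match only after invoking the symmetry of the reachability relation; and the combinatorial fact that makes the restricted operation of rotation powerful enough is that a sink may always be commuted to the end of its word—and dually a source to the front—precisely because everything beyond a sink is a non-neighbour. Making these two observations precise is what turns the slogan ``rotation equals firing'' into a genuine equivalence of relations.
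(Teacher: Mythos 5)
Your proof is correct and takes essentially the same route as the paper: the bijection between Coxeter elements and acyclic edge orientations, together with the dictionary that a rotation of a Coxeter word reverses exactly the edges at the vertex being rotated, so rotation chains and play sequences correspond under the bijection. You in fact make explicit two points the paper glosses over---that a rotation reverses a \emph{source} (the inverse of the paper's sink-firing move, reconciled via the symmetry of reachability from the corollary to Prop.~\ref{playback}) and that an arbitrary firing is realized by commuting a sink to the end of the word before rotating---but the underlying argument is the same.
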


\noindent
{\bf Proof of theorem \ref{main}.}
Our main theorem states that reachability classes and conjugacy classes
coincide. Rotating a letter $s$ from the beginning to the end is the same 
thing as conjugating by $s$, therefore rotation equivalent elements are 
indeed conjugate. Proving that two conjugate Coxeter elements
$w$ and $w^\prime=uwu^{-1}$ must belong to the same reachability class 
is harder, but because of Prop.~\ref{reachability} we need only
prove it for connected non-tree graphs.

The trick is to consider a power $(w^\prime)^k = uw^ku^{-1}$ with $k$
sufficiently large. Note that $(w^\prime)^k$ and $uw^ku^{-1}$ are two
different words for the same group element, so the second one must be
reducible. The proof will have three steps:
\begin{enumerate}
\item The word $(w^\prime)^k$ is reduced. 
\item The word $uw^ku^{-1}$ has a reduced form
$u_1 w u_2$.
\item Since $w$ and $w^\prime$ appear in words representing the
same element, they belong to the same reachability class and are
rotation equivalent (as explained in detail below).
\end{enumerate}

\bigskip
\noindent
We regard the graph as a trunk with limbs. The proof is easiest if there are 
no limbs, but in the end, the limbs will turn out to be of no
consequence. 
\begin{enumerate}
\item Since $(w^\prime)^k$ has the intervening neighbours property, 
Prop.~\ref{intervening2} implies that $(w^\prime)^k$ is reduced,

\item The well-known {\em deletion property} for Coxeter groups (see 
\cite{BB})
states that any word can be brought to a reduced form through a series of 
successive deletions of pairs of letters (not necessarily adjacent).
For $uw^ku^{-1}$, the number of such deletions is the same as the number
of letters in $u$. For any $k$, greater than this number, at least one
instance of $w$ will remain intact after the deletions.

\item Prop.~\ref{intervening2} tells us that all
edge orientations obtained from the words $(w^\prime)^k$ and
$u_1 w u_2$  by the right-to-left process are in the same
reachability class. When the sequence $w$ in the middle of the second
word has just been processed, the edge orientation is of course 
completely defined by $w$. Therefore  $w^\prime$ and $w$ are rotation 
equivalent. \hfil\qed
\end{enumerate}

\noindent
{\bf Note.} Pretzel \cite{Pretzel} showed that two acyclic orientations
belong to the same class if they have the same circulation around every
cycle, so we have a very explicit characterization of the conjugacy
classes.

\noindent
{\bf Acknowledgments.} We thank Henning Mortveit and Matthew Macauley
for urging us to publish the lost proof.

\end{document}